\theoremstyle{plain} 
\newtheorem{theorem}{Theorem}
\newtheorem{lemma}[theorem]{Lemma}
\newtheorem{proposition}[theorem]{Proposition}
\theoremstyle{definition} 
\newtheorem{remark}[theorem]{Remark}
\newtheorem{definition}[theorem]{Definition}
\newtheorem{example}[theorem]{Example}
\newcommand{\R}{\mathbb{R}}
\newcommand{\Pe}{\mathcal{P}}
\newcommand{\jj}{\mathtt{j}}
\newcommand{\norm}[1]{\left\|#1\right\|}
\DeclareMathOperator{\im}{im}
\title{Sufficient conditions for the Shrinking Wellness Lemma}
\author{Clemens Bannwart\footnote{Department of Physics, Mathematics and Computer Science, University of Modena and Reggio Emilia, Italy,
\texttt{clemens.bannwart@gmail.com}}}
\date{}
\begin{document}

\maketitle

\begin{abstract}
    The well groups were introduced by Edelsbrunner, Morozov, and Patel in \cite{Edelsbrunner_QuantTransRobustnessInters} to measure the robustness of geometric features of a function with respect to perturbations. Roughly speaking, the $r$-the well group measures the number of features that cannot be removed by perturbing the function by at most $r$. The Shrinking Wellness Lemma states that the rank of these groups decreases as $r$ increases. In the generality originally stated, it is wrong. We present a counterexample and give conditions under which the result holds. These conditions are general enough to cover most cases in which the well groups have been applied.
\end{abstract}

\section{Background and definitions}

The setup for defining and studying the well groups is the following. We need to have two topological spaces $X,Y$, a subset $A\subseteq Y$ and a continuous function $f\colon X \to Y$. Furthermore we need to fix a subset $\Pe \subseteq C(X,Y)$ of the set of all continuous functions $X\to Y$ and a metric $d_\Pe$ on $\Pe$. We assume that $f \in \Pe$.

\begin{remark}\label{rem:induced_metric}
If we have a metric $d_Y$ on $Y$, this induces an (extended) metric on $C(X,Y)$ (and thus on $\Pe$) by
\begin{equation*}
    d_\Pe(g,h) = \sup_{x \in X} d_Y(g(x),h(x)).
\end{equation*}
We put the word extended because in general the distance $d_\Pe$ can also take the value $\infty$. For the well groups, only those $g \in \Pe$ with $d_\Pe(f,g) < \infty$ will play a role.
\end{remark}

\begin{remark}\label{rem:most_interest}
A case of particular interest is the case where $X=\R^n$, $Y=\R^n$, $A=\{0\}$, $\Pe=C(X,Y)$, $d_Y$ is the Euclidean distance and $d_\Pe$ is induced by $d_Y$ as described above. This is for example the setting used in \cite{Chazal_CompWellVectorFields,WangHotz_Robustness2D,Wangetal_VisualizingRobustness2013}. Explicitly, this means that for $f,g \in \Pe$, 
\begin{equation*}
    d_\Pe(g,h) = \norm{g-h}_\infty = \sup_{x \in X} \norm{g(x)-h(x)}.
\end{equation*}
\end{remark}

In the paper \cite{Edelsbrunner_QuantTransRobustnessInters}, no conditions for the space $\Pe$ are given, however it is suggested that one may consider the space of all maps homotopic to $f$. As we see in the counterexample in the next section, we should put some conditions on $X,Y,A,f,\Pe,d_\Pe$ in order to make the Shrinking Wellness Lemma (SWL) true. 
We propose one possible set of conditions in \Cref{sec:tameness-proof}.
We now introduce the necessary concepts for defining the well groups.

\begin{definition}
Given $r\ge 0$, an \textbf{$r$-perturbation} of $f$ is a map $g \in \Pe$ such that $d_\Pe(f,g) \le r$. We write ${\Pe_r}$ for the set of all $r$-perturbations of $f$.
\end{definition}

\begin{definition}
We define the \textbf{well function} $f_A\colon X \to \R$ as follows:
\begin{equation*}
    f_A(x) := \inf \{r\ge 0 \mid \exists \text{ $r$-perturbation $g$ such that } g(x)\in A  \}. 
\end{equation*}
\end{definition}

\begin{remark}
In the case where we have a metric on $Y$, we can also consider these alternative definitions of the well function, 
\begin{equation*}
    f'_A(x) := d_Y(f(x),A)
\end{equation*}
and
\begin{equation*}
    f''_A(x) := \inf \{r \ge 0 \mid f(x) \in A_r \},
\end{equation*}
where $A_r := \{y \in Y \mid d_Y(y,A) \le r \}$ and $d_Y(y,A) = \inf_{a \in A} d_Y(y,a)$. In the context where the well groups have been applied mostly, the three definitions are the same, as we show in the next proposition. In \cite{Edelsbrunner_QuantTransRobustnessInters}, $f_A$ is used and in \cite{Chazal_CompWellVectorFields}, $f'_A$ is used, however there it takes an even simpler form because $A$ consists of just one point. In the less general situation studied in \cite{Chazal_CompWellVectorFields}, as well as in \cite{WangHotz_Robustness2D,Wangetal_VisualizingRobustness2013}, the different definitions agree, as we show in the next proposition.
\end{remark}

\begin{proposition}\label{prop:stuff}
Assume that we are given a metric $d_Y$ on $Y$.
\begin{enumerate}
    \item For all $x \in X$, we have $f'_A(x) = f''_A(x)$. \label{prop:stuff_item1}
    \item For all $r\ge 0$, we have ${f'_A}^{-1}([0,r]) = f^{-1}(A_r)$. \label{prop:stuff_item2}
    \item Assume that the metric $d_\Pe$ is induced from $d_Y$ as in \Cref{rem:induced_metric}. Then, for all $r\ge 0$, we have $f_A^{-1}([0,r]) \subseteq f^{-1}(A_r)$. \label{prop:stuff_item3}
    \item Assume further that $Y$ is a normed vector space and $d_Y$ is induced from the norm. Then, for all $r\ge 0$, we have $f^{-1}(A_r) \subseteq f_A^{-1}([0,r])$.\label{prop:stuff_item4}
    \item Under the conditions from \ref{prop:stuff_item4}, thus in particular under the conditions of \Cref{rem:most_interest}, we have $f_A = f_A' = f_A''$.\label{prop:stuff_item5}
\end{enumerate}
\end{proposition}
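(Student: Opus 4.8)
The plan is to treat \ref{prop:stuff_item1} and \ref{prop:stuff_item2} as direct unwindings of the definitions, \ref{prop:stuff_item3} and \ref{prop:stuff_item4} as the two inclusions that together identify the sublevel sets of $f_A$ with those of $f'_A$, and \ref{prop:stuff_item5} as a purely formal consequence of the first four. The only place where genuine work is needed is \ref{prop:stuff_item4}, where the normed structure on $Y$ is what lets us manufacture an explicit perturbation.

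For part \ref{prop:stuff_item1}, I would observe that $f(x) \in A_r$ is by definition equivalent to $d_Y(f(x),A) \le r$, so the set defining $f''_A(x)$ is exactly $\{r \ge 0 \mid r \ge d_Y(f(x),A)\}$, whose infimum is $d_Y(f(x),A) = f'_A(x)$ (nonnegativity of $d_Y$ absorbs the constraint $r \ge 0$). For part \ref{prop:stuff_item2}, both ${f'_A}^{-1}([0,r])$ and $f^{-1}(A_r)$ unwind to the same set $\{x \in X \mid d_Y(f(x),A) \le r\}$, again straight from the definition of $A_r$.

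For part \ref{prop:stuff_item3}, the key inequality is that the induced metric dominates the pointwise distance: $d_Y(f(x),g(x)) \le \sup_{x'} d_Y(f(x'),g(x')) = d_\Pe(f,g)$. Fix $x$ with $f_A(x) \le r$. For every $\epsilon > 0$, the definition of the infimum yields an $s$-perturbation $g$ with $g(x) \in A$ and $s < r + \epsilon$; then $d_Y(f(x),A) \le d_Y(f(x),g(x)) \le d_\Pe(f,g) \le s < r + \epsilon$. Letting $\epsilon \to 0$ gives $d_Y(f(x),A) \le r$, i.e.\ $f(x) \in A_r$.

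The main obstacle is part \ref{prop:stuff_item4}, the reverse inclusion, because here one must produce an actual perturbation realizing the distance bound, which is exactly what the normed structure supplies. Suppose $f(x) \in A_r$, so $d_Y(f(x),A) \le r$. Given $\epsilon > 0$, choose $a \in A$ with $\norm{f(x) - a} < r + \epsilon$, set $v := a - f(x)$, and define the translate $g := f + v$. Then $g$ is continuous, $g(x) = a \in A$, and $d_\Pe(f,g) = \sup_{x'}\norm{f(x') - g(x')} = \norm{v} < r + \epsilon$, so $g$ is an $(r+\epsilon)$-perturbation witnessing $f_A(x) \le r + \epsilon$; letting $\epsilon \to 0$ gives $f_A(x) \le r$. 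The one point to verify is that $g$ really lies in $\Pe$: this is where I would invoke the hypothesis of \Cref{rem:most_interest} that $\Pe = C(X,Y)$ (or, more generally, any assumption that $\Pe$ is closed under translation by constant vectors), since if $\Pe$ is too small no perturbation may be able to move $f(x)$ into $A$ and the inclusion can fail. Finally, for part \ref{prop:stuff_item5}, combining \ref{prop:stuff_item3} and \ref{prop:stuff_item4} gives $f_A^{-1}([0,r]) = f^{-1}(A_r)$ for all $r \ge 0$, and then \ref{prop:stuff_item2} shows that $f_A$ and $f'_A$ have the same sublevel set for every $r$; since a function $h \colon X \to [0,\infty]$ is recovered from its sublevel sets via $h(x) = \inf\{r \ge 0 \mid x \in h^{-1}([0,r])\}$, this forces $f_A = f'_A$, and \ref{prop:stuff_item1} then yields $f_A = f'_A = f''_A$.
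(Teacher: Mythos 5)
Your proposal is correct, and parts \ref{prop:stuff_item1}, \ref{prop:stuff_item2}, \ref{prop:stuff_item3} and \ref{prop:stuff_item5} follow essentially the same route as the paper (your part \ref{prop:stuff_item1} is a slightly cleaner direct computation of the infimum, where the paper instead checks equality of all sublevel sets via a chain of $\epsilon$-quantified equivalences; both are fine). The genuine divergence is in part \ref{prop:stuff_item4}. The paper builds a map $\Phi\colon Y\to Y$ that collapses the ball $B_{r+\epsilon}(a)$ to the point $a$, is the identity outside $B_{r+2\epsilon}(a)$, and displaces no point by more than $r+2\epsilon$, and then takes $h=\Phi\circ f$; you instead take the constant translate $g=f+(a-f(x))$. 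Your construction is simpler, gives the slightly sharper bound $r+\epsilon$ rather than $r+2\epsilon$ (irrelevant here, since both go to $r$), and uses the normed structure in the most transparent way. What the paper's localized $\Phi$ buys is that the perturbation agrees with $f$ away from $f^{-1}(B_{r+2\epsilon}(a))$, which could matter if one wanted the argument to survive for a smaller class $\Pe$ that forbids global translations; for the proposition as stated neither construction needs this. Your explicit remark that one must check $g\in\Pe$ (e.g.\ because $\Pe=C(X,Y)$, or because $\Pe$ is closed under translation) is a point the paper leaves implicit --- its own $\Phi\circ f$ equally requires $\Pe$ to contain it --- so flagging it is a genuine improvement in rigor rather than a gap in your argument.
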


\begin{proof}
\hfill
\begin{enumerate}
\item We have 
\begin{align*}
    f_A'(x) \le r 
    &\iff \inf_{a \in A} d_Y(f(x),a) \le r \\
    &\iff \forall \epsilon > 0\ \exists a \in A\ d_Y(f(x),a) \le r+ \epsilon \hspace{51pt}
\end{align*}
and
\begin{align*}
    f''_A(x) \le r
    &\iff \inf \{s\ge 0 \mid f(x) \in A_s \} \le r \\
    &\iff \forall \epsilon >0\ f(x) \in A_{r+\epsilon} \\
    &\iff \forall \epsilon >0\ \inf_{a \in A} d_Y(f(x),a) \le r+\epsilon \\
    &\iff \forall \epsilon > 0\ \forall \delta >0\ \exists a \in A\ d_Y(f(x),a) \le r+\epsilon + \delta.
\end{align*}
Even though one more variable is used in the second statement, these two statements are equivalent, which shows that $f'_A(x)=f''_A(x)$.

\item follows directly from the definitions:
\begin{align*}
    x \in {f'_A}^{-1}([0,r])
    &\iff f'_A(x) \le r \\
    &\iff \inf_{a \in A} d_Y(f(x),a) \le r \\
    &\iff f(x) \in A_r \\
    &\iff x \in f^{-1}(A_r).
\end{align*}

\item Let $x \in f_A^{-1}([0,r])$. Then $f_A(x) \le r$, i.e. for all $\epsilon >0$ there exists an $(r+\epsilon)$-perturbation $h_\epsilon$ of $f$ such that $h_\epsilon(x) \in A$. Then 
\begin{equation*}
    d_Y(f(x),h_\epsilon(x)) \le \sup_{x' \in X} d_Y(f(x'),h_\epsilon(x')) = d_\Pe(f,h_\epsilon) \le r+\epsilon.
\end{equation*}
Since this holds for any $\epsilon>0$, we get that $d_Y(f(x),A) \le r$ and thus $f(x) \in A_r$, i.e. $x \in f^{-1}(A_r)$. 

\item The proof of this statement boils down to the following fact: Given $x \in X$ and $a \in Y$ with $d_Y(f(x),a) \le s$, then there exist $(s+\epsilon)$-perturbations $h_\epsilon$ of $f$ with $h_\epsilon(x)=a$, for arbitrarily small $\epsilon > 0$. Below we show that this fact is true when $Y$ is a normed vector space and $d_Y$ is the distance induced by the norm.

Let $x \in f^{-1}(A_r)$. Then $f(x) \in A_r$, i.e. $d_Y(f(x),A) \le r$. This means that for any $\epsilon >0$, there exists $a=a_\epsilon \in A$ such that $\norm{f(x)-a} \le r+\epsilon$.

Let $\Phi\colon Y \to Y$ be a continuous map such that
\begin{itemize}
    \item $\Phi$ is the identity outside of $B_{r+2\epsilon}(a)$.
    \item $\Phi \equiv a$ on $B_{r+\epsilon}(a)$.
    \item $\norm{\Phi(y)-y} \le r+2\epsilon\ \forall y \in Y$.
\end{itemize}
It is not too difficult to come up with an explicit description of such a map. In words, $\Phi$ contracts $B_{r+\epsilon}(a)$ to the point $a$, while leaving untouched everything outside of $B_{r+2\epsilon}(a)$. In between the two balls we have the condition that $\Phi$ maps the points not too far away from themselves.

Now consider $h=h_\epsilon := \Phi \circ f$. We have that 
\begin{equation*}
    \norm{h-f} = \norm{\Phi \circ f - f} = \sup_{x' \in X} \norm{\Phi(f(x'))-f(x')} \le \sup_{y \in Y} \norm{\Phi(y)-y} \le r+2\epsilon.
\end{equation*}
Therefore, since $h(x)=a \in A$ and $\norm{h-f} \le r+2\epsilon$ is possible for all $\epsilon>0$, we have that $f_A(x)\le r$ and thus $x \in f_A^{-1}([0,r])$.

\item The equality $f'_A=f''_A$ holds in general by \ref{prop:stuff_item1}. By \ref{prop:stuff_item3} and \ref{prop:stuff_item4}, the sublevelsets of $f_A$ are given by $f^{-1}(A_r)$, which on the other hand are equal to the sublevelsets of $f'_A$ by \ref{prop:stuff_item2}. Thus $f_A$ and $f'_A$ have the same sublevelsets and must thus be equal. \qedhere
\end{enumerate}
\end{proof}

\begin{definition}
Note that for an $r$-perturbation $g$ of $f$, we have  $g^{-1}(A) \subseteq f_A^{-1}([0,r])$. We denote the homomorphism in $0$-homology induced by this inclusion by
\begin{equation*}
    \jj_g^r\colon H_0(g^{-1}(A)) \to H_0(f_A^{-1}([0,r])).
\end{equation*}
We use homology with coefficients in a fixed field, which we hide from the notation. Note that in \cite{Edelsbrunner_QuantTransRobustnessInters}, $r$ is hidden from the notation, i.e. it is just written $\jj_g$ instead of $\jj_g^r$.
\end{definition}

\begin{remark}
Note that in \cite{Edelsbrunner_QuantTransRobustnessInters}, the definitions are given by using the direct sum of the homology groups in all degrees, not just degree $0$. We simplify the definition since in the case we are interested in, only $0$-homology has interesting well groups. Also in \cite{WangHotz_Robustness2D,Wangetal_VisualizingRobustness2013}, only $0$-homology is considered. In \cite{Chazal_CompWellVectorFields}, $n$-homology is considered, where $n$ is the dimension of the euclidean space on which the vector field is defined.
\end{remark}

\begin{definition}
The \textbf{well groups} of $f$ (w.r.t. $(\Pe,d_\Pe)$) are defined as
\begin{equation*}
    U(r) := \bigcap_{g \in {\Pe_r}} \im(\jj_g^r) \subseteq H_0(f_A^{-1}([0,r])),
\end{equation*}
for any $r\ge 0$.
\end{definition}

The homology classes of $f^{-1}(A)$ can be thought of as geometric features of the map $f$ and they map surjectively onto the homology classes of $f_A^{-1}([0,r])$. The well group $U(r)$ then describes those features that cannot be avoided by any $r$-perturbation of $f$.
We now need just one more definition before we can state SWL.

\begin{definition}
For $r \le s$, the inclusion $f_A^{-1}([0,r]) \subseteq f_A^{-1}([0,s])$ induces a map in homology, which we denote by
\begin{equation*}
    f_r^s\colon H_0(f_A^{-1}([0,r])) \to H_0(f_A^{-1}([0,s])).
\end{equation*}
\end{definition}

\begin{lemma}[Shrinking Wellness Lemma]
For $r\le s$ we have 
\begin{equation*}
    U(s) \subseteq f_r^s(U(r)).
\end{equation*}
\end{lemma}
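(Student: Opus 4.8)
The plan is to fix a class $\alpha \in U(s)$ and produce a preimage of it under $f_r^s$ lying in $U(r)$. The starting point is the monotonicity of the perturbation sets: since $r \le s$, every $r$-perturbation of $f$ is also an $s$-perturbation, so $\Pe_r \subseteq \Pe_s$. Hence the intersection defining $U(s)$ ranges over a larger index set than the one defining $U(r)$, and in particular $\alpha \in \im(\jj_g^s)$ holds for every $g \in \Pe_r$, not merely for a single worst-case perturbation.

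The next step relates the two maps $\jj_g^r$ and $\jj_g^s$ attached to a fixed $g \in \Pe_r$. For such a $g$ there is a chain of inclusions $g^{-1}(A) \subseteq f_A^{-1}([0,r]) \subseteq f_A^{-1}([0,s])$, and the inclusion $g^{-1}(A) \hookrightarrow f_A^{-1}([0,s])$ factors through $f_A^{-1}([0,r])$. Functoriality of $H_0$ then gives the commuting triangle $\jj_g^s = f_r^s \circ \jj_g^r$, and therefore $\im(\jj_g^s) = f_r^s(\im(\jj_g^r))$. Combining this with the previous paragraph yields $\alpha \in f_r^s(\im(\jj_g^r))$ for every $g \in \Pe_r$, that is,
\[
  \alpha \in \bigcap_{g \in \Pe_r} f_r^s\bigl(\im(\jj_g^r)\bigr).
\]

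What remains---and what I expect to be the main obstacle---is to promote this to $\alpha \in f_r^s\bigl(\bigcap_{g \in \Pe_r} \im(\jj_g^r)\bigr) = f_r^s(U(r))$. For a linear map $\phi$ and a family of subspaces $\{V_g\}$ one always has $\phi(\bigcap_g V_g) \subseteq \bigcap_g \phi(V_g)$, but the reverse inclusion can fail: for each $g$ the class $\alpha$ may admit a preimage inside $\im(\jj_g^r)$ without admitting a single common preimage in $\bigcap_g \im(\jj_g^r)$, since different choices can differ by elements of $\ker(f_r^s)$ that do not lie in every $\im(\jj_g^r)$. Interchanging the intersection with the image under $f_r^s$ is thus the heart of the matter. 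It holds automatically when $f_r^s$ is injective, since then each $\alpha$ has a unique preimage and is forced into the intersection, and more generally under hypotheses that control $\ker(f_r^s)$ or reduce the intersection to a finite or compact family. I expect the conditions introduced in the later section to be tailored precisely to license this interchange; absent such control it genuinely breaks, consistent with the claim that the lemma is false in full generality, so the final write-up should either invoke the relevant hypothesis explicitly at this step or flag it as the gap that the counterexample exploits.
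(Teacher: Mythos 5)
Your analysis coincides with the paper's own treatment of this statement: the paper presents exactly the argument you reconstruct, marks the interchange of $f_r^s$ with the intersection as the faulty step, notes that injectivity of $f_r^s$ suffices, and then gives a counterexample showing the lemma genuinely fails without further hypotheses, so you are right to flag the gap rather than close it. Your guess about how the later conditions are used is also accurate: the paper licenses the interchange by assuming the existence of an $r$-minimizing perturbation $h$ with $\im(\jj_h^r)=\bigcap_{g\in\Pe_r}\im(\jj_g^r)$, which replaces the problematic equality by $\bigcap_{g\in\Pe_r} f_r^s(\im(\jj_g^r))\subseteq f_r^s(\im(\jj_h^r))=f_r^s(U(r))$.
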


There is a very easy wrong proof for SWL, which goes as follows:

\begin{align*}
    U(s) = \bigcap_{g \in \Pe_s} \im(\jj_g^s)
    &\subseteq \bigcap_{g \in {\Pe_r}} \im(\jj_g^s)\\
    &= \bigcap_{g \in {\Pe_r}} \im(f_r^s \circ \jj_g^r)\\
    &= \bigcap_{g \in {\Pe_r}} f_r^s(\im(\jj_g^r))\\
    &{\color{red}=} f_r^s\left(\bigcap_{g \in {\Pe_r}} \im(\jj_g^r) \right)
    = f_r^s(U(r)).
\end{align*}
The problem is the step of the first equality on the last line (drawn in red), which can fail to hold in some cases. In general, given linear subspaces $A,B$ of some vector space $V$ and a linear map $F\colon V \to W$, we have that $F(A)\cap F(B) \supseteq F(A\cap B)$, while the other inclusion is not always true. Note however that equality holds if $F$ is injective. Applying this to our situation proves SWL in the case where $f_r^s$ is injective. The condition that $f_r^s$ is injective means that no two path-components merge when going from $f_A^{-1}([0,r])$ to $f_A^{-1}([0,s])$.

\section{A counterexample}\label{sec:counterexample}

In the following example we describe a situation in which SWL does not hold, i.e. there are two radii $r<s$ such that $U(s)$ has a larger dimension than $U(r)$.

\begin{example}
    Let $X=[-3,3]$, $Y=\R$, $A = \{-2,2\}$, and denote by $f\colon X \to Y$ the inclusion map. Let $\Pe \subseteq C(X,Y)$ be the set of order-preserving isometric embeddings. By $d_Y$ we denote the usual distance on $Y$. This means that we allow only shifts to the right or to the left as our perturbations, no deformations. The restriction of the supremum norm from $C(X,Y)$ to $\Pe$ yields a distance on $\Pe$. 

    Note that $A_r = \{y \in Y \mid d_Y(y,A) \le r \}$ is given by $[-2-r,-2+r] \cup [2-r,4+r]$. Also, $f^{-1}(A_r) = A_r \cap X$. It has two connected components for $0 \le r < 2$ and one for $2\le r < \infty$. Thus
    \begin{center}
\begin{tikzpicture}
  \begin{axis}[
      width=11cm, height=5cm,
      xmin=0, xmax=7.5,
      ymin=0,  ymax=3,
      axis lines=left,            
      xtick={-1,0,1,2,3,4,5,6,7},     
      ytick={0,1,2},            
      xlabel={$r$}, ylabel={\rotatebox{-90}{$ \beta_0(f^{-1}(A_r))$}}, 
      tick align=inside,
      enlargelimits=false
    ]
    \addplot[blue, very thick, domain=0:2] {2};

    \addplot[blue, only marks, mark=*, mark options={draw=blue, fill=white}, mark size=2pt, thick]
        coordinates {(2,2)};

    \addplot[blue, very thick, domain=2:7] {1};

    \addplot[blue, only marks, mark=*, mark size=2pt]
        coordinates {(2,1)};

    \addplot[blue, very thick, dotted, domain=7:7.35] {1};
  \end{axis}
\end{tikzpicture}
    \end{center}
    How does the rank of the well group $U(r)$ evolve as $r$ increases?

    For $0\le r \le 1$, any $r$-perturbation still hits both points of $A$, thus $U(r)$ has rank $2$. 

    For $1<r<2$, an $r$-perturbation of $f$ can avoid one of the two points of $A$, by shifting either to the left or to the right by a distance $r$, but not both at the same time. Therefore $U(r)=0$.

    For $2\le r \le 5$, we have $f^{-1}(A_r)=X$, so it has now only one connected component. Also, no $r$-perturbation can avoid both points of $A$. Therefore $U(r)$ is of rank $1$.

    For $r>5$, there exists an $r$-perturbation $h$ of $f$ with $h^{-1}(A)=\emptyset$ (i.e. $h$ avoids both points of $A$), thus $U(r)=0$.

    In conclusion, we get the following rank of the well groups:
    \begin{center}
\begin{tikzpicture}
  \begin{axis}[
      width=11cm, height=5cm,
      xmin=0, xmax=7.5,
      ymin=0,  ymax=3,
      axis lines=left,            
      xtick={-1,0,1,2,3,4,5,6,7},     
      ytick={0,1,2},            
      xlabel={$r$}, ylabel={\rotatebox{-90}{$ \dim(U(r))$}}, 
      tick align=inside,
      enlargelimits=false
    ]
    \addplot[blue, very thick, domain=0:1] {2};
    \addplot[blue, only marks, mark=*, mark size=2pt]
        coordinates {(1,2)};
        
    \addplot[blue, very thick, domain=1:2] {0.02};
    \addplot[blue, only marks, mark=*, mark options={draw=blue, fill=white}, mark size=2pt, thick]
        coordinates {(1,0.02)};
    \addplot[blue, only marks, mark=*, mark options={draw=blue, fill=white}, mark size=2pt, thick]
        coordinates {(2,0.02)};

    \addplot[blue, very thick, domain=2:5] {1};
    \addplot[blue, only marks, mark=*, mark size=2pt]
        coordinates {(2,1)};
    \addplot[blue, only marks, mark=*, mark size=2pt]
        coordinates {(5,1)};

    \addplot[blue, very thick, domain=5:7] {0.02};
    \addplot[blue, only marks, mark=*, mark options={draw=blue, fill=white}, mark size=2pt, thick]
        coordinates {(5,0.02)};
    \addplot[blue, very thick, dotted, domain=7:7.35] {0.02};
  \end{axis}
\end{tikzpicture}
    \end{center}
    Thus, we observe that the well groups are not shrinking when going from values $1<r<2$ to values $2\le s \le 5$.
\end{example}

Note that here the space $\Pe$ of perturbations is very small (allowing only shifting to the right or to the left), which is used to produce the non-shrinking in this example. It is not the same context as in which the well groups are applied mostly, but it shows that some extra conditions need to be formulated to prove SWL. 
We do this in the next section.

\section{Notions of tameness and proof of SWL}\label{sec:tameness-proof}

We are trying to find some conditions which are strong enough to make SWL true, but also weak enough to be broadly applicable.

The following definition is used in \cite{Edelsbrunner_QuantTransRobustnessInters}:

\begin{definition}
The function $f$ is called \textbf{admissible}, if $f^{-1}(A) = f_A^{-1}(0)$ has a finite rank $0$-homology group.
\end{definition}

This condition is strictly weaker than $f_A$ being tame or q-tame:

\begin{definition}
A function $F\colon X \to \R$ is called 
\begin{itemize}
    \item \textbf{tame} if all the $0$-homology groups of the sublevelsets $F^{-1}((-\infty,r])$ have finite rank,
    \item \textbf{q-tame} if all the maps in $0$-homology that are induced by the inclusions $F^{-1}((-\infty,r]) \subseteq F^{-1}((-\infty,s])$ have finite rank for $r < s$.
\end{itemize}
\end{definition}

The following theorem gives sufficient conditions for SWL to hold.

\begin{theorem}\label{thm:tame_SWL}
If $Y=\R^n$, $d_Y$ denotes the euclidean distance, $A=\{a\} \subseteq Y$ consists of a single point, $\Pe = C(X,Y)$ is endowed with the supremum norm $d_\Pe$, and $f\colon X \to Y$ is a function such that $f_A$ is tame, then SWL holds.
\end{theorem}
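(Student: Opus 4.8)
The plan is to reduce the statement to a purely combinatorial inclusion between finite sets of path-components, and then to establish that inclusion by an explicit construction of perturbations. First I would use \Cref{prop:stuff} to rewrite everything through $f$ and the metric: since $A=\{a\}$ and $d_Y$ is Euclidean, item~\ref{prop:stuff_item5} gives $f_A(x)=\norm{f(x)-a}$ and $f_A^{-1}([0,r])=f^{-1}(A_r)$, where $A_r$ is the closed ball of radius $r$ about $a$. Working with field coefficients, $H_0(f_A^{-1}([0,r]))$ is free on the path-components of $f_A^{-1}([0,r])$, and tameness of $f_A$ guarantees there are only finitely many of them. For a perturbation $g\in\Pe_r$ the inclusion $g^{-1}(a)\subseteq f_A^{-1}([0,r])$ shows that $\im(\jj_g^r)$ is exactly the coordinate subspace spanned by those components that meet $g^{-1}(a)$. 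Intersecting over all $g$, the group $U(r)$ is the coordinate subspace spanned by the \emph{unavoidable} components, i.e.\ those $D$ for which every $r$-perturbation hits $a$ somewhere on $D$; call this set of components $S_r$.

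Because images and intersections of coordinate subspaces are again coordinate subspaces, I can describe both sides of SWL explicitly and thereby sidestep the faulty commutation of $F(\bigcap)$ with $\bigcap F$ highlighted after the lemma. Indeed $U(s)$ is spanned by $S_s$, while $f_r^s(U(r))$ is spanned by the set $S'_r$ of level-$s$ components that contain at least one component in $S_r$. Hence SWL is equivalent to the combinatorial inclusion $S_s\subseteq S'_r$: every unavoidable level-$s$ component $D'$ must contain an unavoidable level-$r$ component. I would prove the contrapositive: assuming that every level-$r$ component $D_1,\dots,D_k\subseteq D'$ (finitely many, by tameness) is avoidable at level $r$, I construct a single $s$-perturbation $g$ with $g^{-1}(a)\cap D'=\emptyset$, witnessing that $D'\notin S_s$.

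For the construction, fix avoiders $g_j\in\Pe_r$ with $g_j\neq a$ on $D_j$. Using that the $D_j$ are finitely many disjoint closed sets, I would choose Urysohn bumps $\varphi_j$, equal to $1$ on $D_j$ and supported in pairwise disjoint neighborhoods, and set $G=f+\sum_j \varphi_j (g_j-f)$; the disjoint supports make $\norm{G-f}_\infty\le r$ automatic and give $G=g_j\neq a$ on each $D_j$. I then fix a continuous ramp $\tau\colon[0,\infty)\to[0,1]$ with $\tau\equiv 1$ on $[0,r]$ and $\tau\equiv 0$ on $[s',\infty)$ for some $r<s'<s$ (possible since the case $r=s$ is trivial), and define on $D'$
\[
 g \;=\; f+\tau(f_A)\,(G-f),
\]
extended by $g=f$ on $X\setminus D'$. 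Since $\tau(f_A)=0$ near the frontier of $D'$ (where $f_A=s$), this $g$ is continuous, and $\norm{g-f}_\infty\le r\le s$, so $g\in\Pe_s$.

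The heart is checking $g\neq a$ on $D'$, and here the single-point hypothesis does the work through the identity $f_A=\norm{f-a}$. Where $f_A(x)\le r$ we have $\tau=1$ and $g(x)=G(x)\neq a$; where $f_A(x)>r$ the reverse triangle inequality gives
\[
 \norm{g(x)-a}\ \ge\ \norm{f(x)-a}-\tau(f_A(x))\,\norm{G(x)-f(x)}\ \ge\ f_A(x)-r\ >\ 0 .
\]
Thus $g$ avoids $a$ on all of $D'$, completing the contrapositive and hence SWL. I expect the genuine obstacle to be not this estimate but the point-set bookkeeping it rests on: that $H_0$ really counts finitely many path-components of the sublevel sets, that these components are closed and can be separated by disjoint open sets (so the bumps $\varphi_j$ exist and patch continuously), and that $g$ extends continuously across the frontier of $D'$. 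These are automatic under mild hypotheses on $X$ (for instance $X$ normal and locally path-connected, as in all the applications), which I would make explicit; the metric input beyond them is only the elementary estimate above, which moreover never spends more budget than $r$.
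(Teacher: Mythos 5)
Your proposal is correct in substance but takes a genuinely different route from the paper. The paper isolates an algebraic fix: it defines an \emph{$r$-minimizing} perturbation $h$ (one with $\im(\jj_h^r)=U(r)$), shows in \Cref{lem:property_SWL} that the existence of such an $h$ for every $r$ repairs the faulty commutation of $f_r^s$ with the intersection, and then builds $h$ by iterating the Urysohn gluing of \Cref{lem:extend_pert} over the finitely many avoidable path-components of $f_A^{-1}([0,r])$. You instead make the $H_0$ picture fully explicit --- every subspace in sight is a coordinate subspace indexed by path-components --- reduce SWL to the combinatorial statement that every unavoidable $s$-component contains an unavoidable $r$-component, and prove the contrapositive by a direct construction. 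Your gluing of the avoiders $g_j$ into $G$ is the same Urysohn argument as \Cref{lem:extend_pert}, done in one step rather than iteratively, and it carries the same unstated point-set requirements (the path-components must be closed and separable by disjoint open sets); you are right to flag these, and the paper's own proof needs them just as much. What your approach buys is transparency: it exhibits exactly where the hypothesis $A=\{a\}$ enters, namely through $f_A=\norm{f-a}$ and the reverse triangle inequality, which the paper's argument leaves implicit. What the paper's approach buys is modularity: \Cref{lem:property_SWL} is a clean sufficient condition that makes no reference to the metric geometry and can be verified in other settings.

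Two remarks on your construction. First, the ramp $\tau(f_A)$ and the restriction of $g$ to $D'$ are unnecessary: your own estimate, applied to $G$ itself, gives $\norm{G(x)-a}\ge\norm{f(x)-a}-\norm{G(x)-f(x)}\ge f_A(x)-r>0$ wherever $f_A(x)>r$, so the globally defined $r$-perturbation $G$ already avoids $a$ on all of $D'$: on $\bigcup_j D_j=D'\cap f_A^{-1}([0,r])$ by construction, and on $D'\cap\{f_A>r\}$ by the estimate. Second, this simplification is worth making because the piecewise definition ($g=f$ off $D'$) is the one genuinely fragile step you add beyond what the paper needs: a path-component $D'$ of the closed set $f_A^{-1}([0,s])$ need not itself be closed, and the claim that $\tau(f_A)$ vanishes near $\overline{D'}\setminus D'$ requires $f_A\ge s'$ on $D'$ near such points, which is not automatic without local path-connectedness of $X$. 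Dropping the ramp removes this issue entirely and leaves a proof no less rigorous than the paper's.
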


The proof is based on the following two lemmata.

\begin{lemma}\label{lem:extend_pert}
Let $X$ be a topological space, let $Y$ be a normed vector space and $A=\{0\}$. Let $\Pe=C(X,Y)$ and denote by $d_\Pe$ the supremum norm on $\Pe$. Let $g,g'$ be two $r$-perturbations of $f$ and let $C,C' \subseteq X$ be two closed sets, 
such that $g$ has no zero on $C$ and $g'$ has no zero on $C'$. Then there exists an $r$-perturbation $h$ of $f$ such that $h$ has no zero on $C \cup C'$.
\end{lemma}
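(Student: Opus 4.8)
The plan is to realize the desired $h$ as a pointwise convex combination of $g$ and $g'$. For any continuous $\lambda\colon X\to[0,1]$, set
\[
h \;=\; (1-\lambda)\,g + \lambda\,g'.
\]
Since $g(x)$ and $g'(x)$ both lie in the closed ball $\{y:\norm{y-f(x)}\le r\}$, which is convex, the point $h(x)$ lies in that same ball; hence $\norm{h(x)-f(x)}\le r$ for every $x$ and $h$ is automatically an $r$-perturbation of $f$, for every choice of $\lambda$. This reduces the whole lemma to choosing $\lambda$ so that $h$ has no zero on $C\cup C'$. The only points where $h(x)$ can vanish are those where $0$ lies on the segment $[g(x),g'(x)]$, i.e. where $g(x)=0$, or $g'(x)=0$, or $g(x)$ and $g'(x)$ are both nonzero but point in opposite directions.

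First I would dispose of the points where exactly one of $g,g'$ vanishes. On $C\cup C'$ we have $\norm{g}+\norm{g'}>0$, because $g$ is nonzero on $C$ and $g'$ is nonzero on $C'$. The weighted choice
\[
\lambda=\frac{\norm{g'}}{\norm{g}+\norm{g'}},
\qquad\text{so that}\qquad
h=\frac{\norm{g}\,g+\norm{g'}\,g'}{\norm{g}+\norm{g'}},
\]
is built purely from the continuous functions $\norm{g},\norm{g'}$, so its continuity needs no normality or Urysohn input about $X$; it equals $g'$ wherever $g=0$, equals $g$ wherever $g'=0$, and extends continuously by $h=0$ on the common zero set (using the bound $\norm{h}\le\norm{g}+\norm{g'}$). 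A short computation shows this $h$ vanishes exactly on $\{g=-g'\}$, so on $C\cup C'$ it is nonzero away from the set $E=\{x\in C\cup C':g(x)=-g'(x)\ne0\}$.

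The set $E$ is exactly where I expect the main obstacle. On $E$ the vectors $g$ and $g'$ are antiparallel of equal norm, so the symmetric weight lands on the forbidden midpoint $\lambda=\tfrac12$ and $h$ is zero there. Since both $g$ and $g'$ are nonzero on $E$, locally either $\lambda=0$ or $\lambda=1$ would be safe, so the real difficulty is a globally continuous tie-break. A naive correction only relocates the problem: along the whole antiparallel locus the forbidden weight is $\lambda^{\ast}=\norm{g}/(\norm{g}+\norm{g'})$, which sweeps out all of $(0,1)$, so any $\lambda$ that is forced toward $1$ near the zeros of $g$ and toward $0$ near the zeros of $g'$ must, by the intermediate value theorem, hit $\lambda^{\ast}$ somewhere along a path through that locus. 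I therefore expect the hypotheses to be indispensable precisely here: wherever $\norm{f}>r$ every perturbation is automatically nonzero, so only the sublevel region $f^{-1}(A_r)=f_A^{-1}([0,r])$ is genuinely at stake, and the slack available there — or the specific way $C,C'$ arise as (unions of) components of that sublevel set rather than arbitrary closed sets — should provide the room to nudge $h$ off zero near $E$ in a globally consistent fashion. Identifying exactly which extra control on $f$ over $C\cup C'$ makes this last step go through is the crux, and the part of the proof on which I would concentrate.
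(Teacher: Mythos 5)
There is a genuine gap, and it sits exactly where you say it does: you never get past the antiparallel locus $E$. The idea you are missing is that the interpolation weight should be a function of the point $x \in X$, chosen to separate $C$ from $C'$, rather than a function of the values $g(x),g'(x)$. The paper takes a Urysohn-type function $\varphi\colon X \to [0,1]$ with $\varphi \equiv 1$ on $C$ and $\varphi \equiv 0$ on $C'$ and sets $h = \varphi\, g + (1-\varphi)\, g'$. Your first paragraph already supplies the only analytic input needed (convexity of the ball $\{y : \norm{y - f(x)} \le r\}$ makes \emph{every} such pointwise convex combination an $r$-perturbation), and with this position-dependent weight the nonvanishing is immediate: $h = g$ on $C$ and $h = g'$ on $C'$, so the combination is degenerate precisely where it matters and the antiparallel set never enters. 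Your weight $\lambda = \norm{g'}/(\norm{g}+\norm{g'})$ is in fact strictly worse even when $C$ and $C'$ are far apart: at a point of $C$ where $g' = -g$ your $h$ vanishes, although $h=g$ would be fine there. The speculation in your last paragraph — that one must exploit slack in $f$, or the way $C,C'$ arise from $f_A^{-1}([0,r])$ — is a red herring; no such input is used or needed for the interpolation step.

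One point in your favour: the Urysohn step silently assumes that $C$ and $C'$ are disjoint and can be separated by a continuous function on $X$ (the lemma as stated assumes neither, and the paper's appeal to normality of $Y$ rather than $X$ is beside the point), and your formula was evidently an attempt to avoid exactly this. If $C \cap C' \neq \emptyset$ no $\varphi$ of the required form exists, and your intermediate-value obstruction becomes a real concern for any convex-combination ansatz. But in the only place the lemma is used, the proof of \Cref{thm:tame_SWL}, the sets $C$ and $C'$ are unions of distinct path-components of $f_A^{-1}([0,r])$, hence disjoint; so the right reaction is to record disjointness (and separability) as a hypothesis and run the paper's one-line argument, not to search for extra control on $f$.
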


\begin{proof}
By Urysohn's Lemma (which we can apply since $Y$ is a normed vector space, so in particular it's metrizable and thus normal) there exists a function $\varphi\colon Y \to [0,1]$ with $\varphi \equiv 1$ on $C$ and $\varphi \equiv 0$ on $C'$. We define $h:= \varphi\cdot g + (1-\varphi)\cdot g'$ and we have to show two things.

\underline{$h$ is an $r$-perturbation:} For any $x\in X$, we have 
\begin{align*}
    \norm{f(x)-h(x)} &= \norm{\varphi(x)(f(x)-g(x)) + (1-\varphi(x))(f(x)-g'(x))} \\
    &\le \varphi(x)\norm{f(x)-g(x)}+(1-\varphi(x))\norm{f(x)-g'(x)}\\ &\le \varphi(x)\cdot r + (1-\varphi(x))\cdot r = r.
\end{align*}

\underline{$h$ has no zero on $C\cup C'$:} On $C$ we have $h\equiv g$, thus $h$ has no zero on $C$. On $C'$ we have $h\equiv g'$, thus $h$ also has no zero on $C'$. 
\end{proof}

The idea of the next lemma is to give a sufficient condition for SWL to hold. In words, this condition demands that for every $r\ge 0$, there exists an $r$-perturbation $h$, such that $h$ has no zeros on every path-component where it's possible, i.e. for any path-component $C$ of $f_A^{-1}([0,r])$, if there exists an $r$-perturbation with no zero on $C$, then $h$ has no zero on $C$. Let us define this more formally now.

\begin{definition}
An $r$-perturbation $h \in \Pe_r$ is called \textbf{$r$-minimizing}, if
\begin{equation*}
    \bigcap_{g \in {\Pe_r}} \im(\jj_g^r) = \im(\jj_h^r).
\end{equation*}
\end{definition}

\begin{lemma}\label{lem:property_SWL}
Assume that $\Pe$ has the property that for every $r\ge 0$, there exists an $r$-minimizing $r$-perturbation. Then SWL holds.
\end{lemma}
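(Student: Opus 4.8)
The plan is to exploit the hypothesis in order to replace the troublesome intersection $\bigcap_{g\in\Pe_r}\im(\jj_g^r)$ by the image of a \emph{single} map, which sidesteps the failed commutation step of the ``wrong proof'' entirely. Fix $r\le s$. By assumption there is an $r$-minimizing $r$-perturbation $h$, so that by definition
\begin{equation*}
    U(r) = \bigcap_{g\in\Pe_r}\im(\jj_g^r) = \im(\jj_h^r).
\end{equation*}

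First I would observe that $h$ is simultaneously an $s$-perturbation: since $d_\Pe(f,h)\le r\le s$, we have $h\in\Pe_s$, so $h$ is one of the maps over which the intersection defining $U(s)$ is taken. This gives immediately
\begin{equation*}
    U(s) = \bigcap_{g\in\Pe_s}\im(\jj_g^s) \subseteq \im(\jj_h^s).
\end{equation*}

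The key step is then to identify $\im(\jj_h^s)$ with $f_r^s(U(r))$. For this I would use the commuting triangle of inclusions
\begin{equation*}
    h^{-1}(A) \hookrightarrow f_A^{-1}([0,r]) \hookrightarrow f_A^{-1}([0,s]),
\end{equation*}
whose long edge is precisely the inclusion inducing $\jj_h^s$. Applying the functor $H_0$ turns this into a commuting triangle of linear maps, yielding the factorization $\jj_h^s = f_r^s\circ\jj_h^r$. Consequently
\begin{equation*}
    \im(\jj_h^s) = \im(f_r^s\circ\jj_h^r) = f_r^s\bigl(\im(\jj_h^r)\bigr) = f_r^s(U(r)).
\end{equation*}
Combining this with the inclusion of the previous paragraph gives $U(s)\subseteq f_r^s(U(r))$, which is exactly SWL.

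The reason this argument succeeds where the naive one fails is that we never push $f_r^s$ across an intersection of subspaces: the minimizing hypothesis collapses that intersection to the single image $\im(\jj_h^r)$ \emph{before} the map $f_r^s$ is applied, and a linear map always commutes with taking the image of one subspace. I therefore do not expect a genuine obstacle here; the only point requiring a moment's care is the verification that the triangle of inclusions commutes at the level of spaces (so that functoriality of $H_0$ applies), but this is immediate since all three arrows are set inclusions and the composite of the two short inclusions is literally the long one.
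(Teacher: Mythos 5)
Your proof is correct and is essentially the paper's argument: both exploit the minimizing perturbation $h$ to reduce the intersection to the single image $\im(\jj_h^r)$ and then use the factorization $\jj_h^s = f_r^s\circ\jj_h^r$ to push $f_r^s$ through. The only cosmetic difference is that you pass directly from $U(s)$ to $\im(\jj_h^s)$ using $h\in\Pe_s$, whereas the paper first restricts the intersection to all of $\Pe_r$ before singling out $h$; the logical content is identical.
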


\begin{proof}
This property is useful because it makes work the wrong proof that we presented earlier. This is done as follows: Let $r \le s$ and let $h \in {\Pe_r}$ be an $r$-minimizing $r$-perturbation. Then we get that

\begin{align*}
    U(s) = \bigcap_{g \in \Pe_s} \im(\jj_g^s)
    &\subseteq \bigcap_{g \in {\Pe_r}} \im(\jj_g^s)\\
    &= \bigcap_{g \in {\Pe_r}} \im(f_r^s \circ \jj_g^r)\\
    &= \bigcap_{g \in {\Pe_r}} f_r^s(\im(\jj_g^r))\\
    &{\color{green}\subseteq} f_r^s(\im(\jj_h^r))\\
    &{\color{green}=}f_r^s\left(\bigcap_{g \in {\Pe_r}} \im(\jj_g^r) \right)
    = f_r^s(U(r)),
\end{align*}
where inclusion on the second to last line and the first equality on the last line (both in green) replace the red equality from before.
\end{proof}

\begin{proof}[Proof of \Cref{thm:tame_SWL}]
Without loss of generality we may assume that $A=\{0\}$, since otherwise we can just replace $f$ by a translation of $f$.
The strategy of the proof is to use tameness together with \Cref{lem:extend_pert} to show that $r$-minimizing perturbations exist for all $r\ge 0$.

Let $r \le s$. By tameness, $H_0(f_A^{-1}([0,r]))$ has finite rank, i.e. $f_A^{-1}([0,r])$ has finitely many path-components. Let us label them as $C_1,\ldots,C_m$ in such a way that for $1\le i \le k$ there exists an $r$-perturbation $g_i$ with no zero on $C_i$ and for the components $C_{k+1},\ldots,C_m$, no such $r$-perturbations exist. 

By iteratively applying \Cref{lem:extend_pert}, we can get one $r$-perturbation with no zero on $C_1 \cup \cdots \cup C_k$. We first define $h_1 := g_1$. Then, in the $i$-th step, if we have already found an $r$-perturbation $h_i$ with no zero on $C_1\cup \cdots \cup C_i$, then we can apply \Cref{lem:extend_pert} to $h_i$ and $g_{i+1}$ to get an $r$-perturbation $h_{i+1}$ with no zero on $C_1\cup \cdots \cup C_{i+1}$. After $d$ steps, $h:=h_k$ is the desired $r$-perturbation with
\begin{equation*}
    \bigcap_{g \in {\Pe_r}} \im(\jj_g^r) = \im(\jj_h^r).
\end{equation*}
This shows that the condition from \Cref{lem:property_SWL} is satisfied and thus SWL holds.
\end{proof}

\begin{remark}
If one considers the counterexample in \Cref{sec:counterexample}, then one can see that the condition for \Cref{lem:property_SWL} is not met. Specifically, there exists no $r$-minimizing $r$-perturbation for $1<r<2$. 
\end{remark}

\begin{remark}
    One might argue that the setting from \Cref{rem:most_interest} is the one of most interest, since \cite{Chazal_CompWellVectorFields,WangHotz_Robustness2D,Wangetal_VisualizingRobustness2013} are all treating this case. In \cite{Edelsbrunner_StabilityAppCont2Manifold}, a slightly different case is treated, where $X$ can be any 2-dimensional manifold and $A$ can be any point in $\R^2$. Adding the tameness assumption, it follows from \Cref{thm:tame_SWL} that SWL holds in these situations. 
\end{remark}

\section{Conclusion and open questions}

We presented an example that shows that SWL does not hold under the most general conditions and we gave a set of sufficient conditions under which SWL holds. These are general enough to be applicable in most cases of interest, but it seems plausible that there might be weaker conditions which would suffice to prove SWL. For example, it might be possible to weaken the condition of tameness of $f_A$ to q-tameness, or even just admissibility of $f$. Another direction of investigation could be to formulate precise conditions on $\Pe$ and $d_\Pe$, instead of just demanding that $\Pe=C(X,Y)$ and $d_\Pe$ is the distance induced by the supremum norm.

\paragraph{Acknowledgements.} I would like to thank my supervisor Claudia Landi as well as Dmitriy Morozov for helpful discussions on this topic. 
This work was carried out under the auspices of  INdAM-GNSAGA and within the activities of ARCES (University of Bologna).

\bibliographystyle{amsplain}
\bibliography{Bibliography}

\end{document}